\theoremstyle{plain}
\newtheorem{mainthm}{Theorem}
\newtheorem{theorem}{Theorem}[section]
\newtheorem{lemma}[theorem]{Lemma}
\newtheorem{corollary}[theorem]{Corollary}
\newtheorem{conjecture}{Conjecture}
\newtheorem{definition}{Definition}
\theoremstyle{definition}
\newtheorem{example}[theorem]{Example}
\newtheorem{problem}[theorem]{Problem}
\DeclareMathOperator{\IFS}{IFS}
\DeclareMathOperator{\vol}{Vol}
\begin{document}

\bibliographystyle{amsplain}

\title[Non-removable term ergodic action semigroups/groups]{Non-removable term ergodic action semigroups/groups}
\author[A. Sarizadeh]{Ali Sarizadeh}
\address{Department of Mathematics,
 Ilam University, P.O.Box 69315-516, Ilam, Iran.}
\email{ali.sarizadeh@gmail.com}

\keywords{action semigroup, action group,
minimality, ergodicity, robust property, circle packing}.\\
\subjclass[2000]{26A18, 37A99, 28A20}
\begin{abstract}

\noindent  In this work, we introduce the concept of term  ergodicity for action semigroups and construct
semigroups on two dimensional manifolds  which are $C^{1+\alpha}$-robustly term ergodic. Moreover,
we illustrate the term ergodicity by some exciting examples.

At last, we study a problem in the context of circle packing which is concerned to term ergodic.
\end{abstract}

\maketitle
\section*{Introduction}
As is well known, in ordinary dynamical system $(M,f)$, where $M$ is
a compact metric space and $f$ is a map from $M$ to itself, an
invariant probability measure $\mu$ is said to be ergodic if every
invariant measurable set is either of zero or full $\mu$-measure.
The definition of ergodicity, for action semigroups/groups
generated by $\mathcal{G}=\{g_{1},\ldots,g_{s}\}$ on $M$, is
naturally extended to quasi-invariant measure\footnote{A measure
$\mu$ is said to be quasi-invariant if $(g_i)_\ast\mu$ is absolutely
continuous with respect to $\mu$ for every element $g_i$ in
$\mathcal{G}$.}. Recall that a quasi-invariant measure is ergodic if
every measurable invariant set with respect to $\mathcal{G}$ is
either of zero or full measure. In view of topological theory,
counterpart to ergodicity is minimality. More precisely, an action semigroup/group is said to be minimal if each closed invariant
subset is either empty or coincides with the  whole of space.
Minimal systems have been studied extensively by many authors, see
for instance ~\cite{dkn,ghs,hn,1002.37017,1086.37026,br}.
Authors in ~\cite{1002.37017}, provided an
example of an action semigroup/group generated by two circle
diffeomorphisms, that is robustly minimal in the
$C^1$-topology.
In ~\cite{ghs}, this one-dimensional example is
generalized to an action semigroup/group on higher-dimensional
manifolds which is also $C^1$-robustly minimal. Recently, in ~\cite{hn},
the authors  provided an action semigroup generated by two
diffeomorphisms on any
compact manifold that is  $C^1$-robustly minimal.
In typical papers, finding a local invariant set with non-empty interior plays a key role.

Now, we are going to concentrate on some relations between
ergodicity and  minimality in action semigroups/groups. Notice,
there are some examples of ergodic action semigroups/groups having
global fixed points. So, in general, ergodicity does not imply
minimality. Thus, in the opposite direction, a natural question
arises: which system having the minimal property is ergodic? To
answer this question one can refer to an earlier result contained
in~\cite{dkn} which allows to solve the following conjecture
concerning the one-dimensional case in the affirmative under some
additional assumptions, although the conjecture and the question are
apparently far from each other.
\begin{conjecture}\label{con}
Every minimal smooth action of a finitely generated group on the
circle is ergodic with respect to the Lebesgue measure.
\end{conjecture}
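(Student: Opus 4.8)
The plan is to establish the dichotomy that drives Conjecture~\ref{con}. Write $G=\langle g_1,\dots,g_s\rangle\subset\mathrm{Diff}^{1+\alpha}_+(S^1)$ for the acting group ($C^{1+\mathrm{bv}}$ suffices, and allowing orientation-reversing elements changes nothing essential), and assume the action is minimal. I would split into two cases according to whether or not $G$ preserves a Borel probability measure on $S^1$. The case \emph{without} an invariant measure is handled completely by the machinery of~\cite{dkn}; the case \emph{with} one is where the ``additional assumptions'' alluded to above must enter, and it is the real obstacle.

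\textbf{Case without invariant measure.} Here minimality together with the $C^{1+\alpha}$ regularity places us in the setting of~\cite{dkn}, which supplies a \emph{contraction mechanism}: for every nonempty open arc $U$ and every $\varepsilon>0$ there is $g\in G$ mapping $S^1\setminus U$ into an arc of Lebesgue measure $<\varepsilon$ (equivalently, along the trajectory of the associated random walk a fixed small arc is, with large probability, contracted, its iterates having summable lengths). Granting this, let $A\subset S^1$ be measurable and $G$-invariant with $\mathrm{Leb}(A)>0$, and suppose for contradiction that $\mathrm{Leb}(A^c)>0$ as well. Choose a Lebesgue density point $p$ of $A$ and a tiny arc $I\ni p$ with $\mathrm{Leb}(A\cap I)\ge(1-\delta)\,\mathrm{Leb}(I)$. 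Applying the contraction mechanism to an arc $U$ disjoint from $I$ produces $g\in G$ with $g(S^1\setminus U)\subset J$ and $|J|<\varepsilon$; then $g^{-1}$ carries $J$ — hence a \emph{fixed} arc — onto $S^1\setminus U$, a set of measure $>1-\varepsilon$. The crucial point is that, since the iterates of $I$ along the word defining $g$ stay small (summable lengths), a Denjoy-type estimate — controlling $\log Dg$ by the total variation / H\"older norm of the $\log Dg_i$, which is exactly where $C^{1+\alpha}$ rather than merely $C^1$ is used — yields \emph{bounded distortion} of $g^{-1}$ on $J$, uniformly in $\varepsilon$. Hence $g^{-1}$ transports the high density of $A$ in $I$ to high density of $A$ on almost all of $S^1$; covering $S^1$ by finitely many translates of such arcs (using minimality) forces $\mathrm{Leb}(A^c)$ to be arbitrarily small, a contradiction, so the action is Lebesgue-ergodic. (Alternatively one argues directly with the martingale formed by the conditional densities of $A$ along the random walk, as in~\cite{dkn}.)

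\textbf{Case with an invariant measure.} Now $\mu$ is $G$-invariant; minimality forces $\mu$ to be atomless and of full support, so $h(x)\eqdef\mu([x_0,x])$ defines a circle homeomorphism conjugating $G$ to a group of rotations. That rotation group is minimal, hence dense, hence ergodic for Lebesgue measure, so ergodicity of the \emph{original} action for Lebesgue measure follows once one knows $h_\ast\mathrm{Leb}\ll\mathrm{Leb}$ (indeed $\mathrm{Leb}(A)=(h_\ast\mathrm{Leb})(h(A))$ for every $G$-invariant $A$, and $h(A)$ is then rotation-invariant, so of measure $0$ or $1$); the two statements are in fact essentially equivalent. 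This absolute continuity of the linearizing conjugacy is precisely what fails in general — already for a single diffeomorphism it is the Herman--Yoccoz phenomenon: valid when, say, some generator has Diophantine rotation number, but not for suitably Liouville ones — so the conjecture can only be closed under a hypothesis of this type (e.g.\ a Diophantine condition on some rotation number, or that the $\mu$-action is $C^1$-conjugate to rotations, or that $\mu$ is itself absolutely continuous).

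\textbf{Main obstacle.} The genuine difficulty is thus concentrated in the invariant-measure case: reducing Lebesgue-ergodicity to a regularity statement about a topological conjugacy that need not be absolutely continuous. The no-invariant-measure case is, modulo~\cite{dkn}, a bounded-distortion blow-up argument whose only delicate ingredient is keeping the distortion estimates uniform along the (a priori long) words furnished by the contraction mechanism — which is exactly where the $C^{1+\alpha}$ hypothesis, and the total-variation control of $\log Dg_i$, earn their keep.
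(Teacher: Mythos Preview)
The statement you are addressing is labelled a \emph{Conjecture} in the paper, and the paper contains no dedicated proof of it. The sentence in the introduction promising a proof ``under some assumption which is not unusual'' is never cashed out: the paper's actual results are Theorem~\ref{thmA} on two--dimensional manifolds and the lemmas feeding into it. The only circle--specific content is the example following the proof of Theorem~\ref{thmA}, where the semigroup $\langle f_1,R_\lambda\rangle^+$ with $f_1$ a north--south map is declared (term) ergodic; the implicit extra hypothesis is thus the presence of a generator with a hyperbolic attracting fixed point, so that the bounded--distortion estimate of Lemma~\ref{prop-dist} applies directly on its basin, and Lemma~\ref{lemm2} then forces the contradiction. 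That is the whole of the paper's contribution toward Conjecture~\ref{con}.

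Your outline is therefore not comparable to a proof in the paper --- there is none --- but it is more systematic than anything the paper offers. The invariant--measure / no--invariant--measure dichotomy is the correct organizing principle; your treatment of the first case via~\cite{dkn} is standard and sound; and your identification of the second case as the genuine obstacle, reducible to absolute continuity of the linearizing conjugacy and hence to a Diophantine--type hypothesis, is exactly right (and is consistent with the paper's own invocation of Furstenberg~\cite{fu} to explain why some extra assumption is unavoidable). The paper's implicit route --- assume a contracting generator and run bounded distortion --- is a special instance of your no--invariant--measure case, since a north--south map rules out any invariant probability for the group; it simply bypasses the random--walk contraction mechanism of~\cite{dkn} by having the contraction handed over explicitly by one generator. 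In short, your proposal is a correct and strictly more general sketch; the paper does not actually prove the conjecture, and the argument it gestures at covers less ground than your Case~1 already does.
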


In this note, we prove the above mentioned conjecture under some assumption which is not unusual.
Actually, by this assumption, we insure that our result does not have any contradiction with results by Furstenberg \cite{fu}.

On the other hand, if there exists a relation between
minimality and ergodicity, it is natural to get, as a
corollary, robustness of ergodic systems. By these results, we
provide an example of some action semigroup/group which is both of
robustly minimal and robustly term ergodic.

Finally, as an application of this note, we study the branch of mathematics generally known as \emph{circle packing}. The packing problem is concerned with how to pack a number of objects, each with given shape and size, into bounded region  without overlap (see more details on this context \cite{gjls}, also see http://hydra.nat.uni-magdeburg.de/packing/cci/\#Overview). Here, we consider a problem of packing of circles with unequal radii and some additional property into a given circle. In fact, by benefit the concept of term ergodicity, we this problem in the circle packing.

\section{Main results}
We begin to introduce some definitions and notations and then formulate our main results. Throughout
this paper, $M$ stands for a smooth compact Riemannian manifold and
$\vol$ is normalized volume. Also, consider the space
$\mathrm{Diff}^1(M)$ of $C^1$ diffeomorphisms of $M$, endowed with
the $C^1$-topology. A point $x\in M$ is a Lebesgue density point of
measurable set $A\subseteq M$ if
$$\lim_{r\rightarrow 0}\vol(A:B(x,r))=\frac{\vol(A\cap
B(x,r))}{\vol(B(x,r))}=1,$$ where $B(x,r)$ is the geodesic ball of
radius $r$ centered about $x$. Denote by $DP(A)$ the set of Lebesgue
density points of a measurable set $A$. By Lebesgue density point
theorem, for every measureable set $A$, $$\vol(A\bigtriangleup
DP(A))=0.$$

Now, consider a collection of diffeomorphisms
$\mathcal{G}=\{g_{1},\ldots,g_{s}\}$ on $M$. Write
$\mathcal{G}^{-1}=\{g_{1}^{-1},\ldots,g_{s}^{-1}\}$. The action semigroup
$<\mathcal{G}>^+$ generated by $\mathcal{G}$ is given by
\[
  <\mathcal{G}>^+= \{h:M\to M \colon \, h=g_{i_n}\circ \dots \circ
  g_{i_1}, \ i_j \in \{1,\ldots,s\}  \}\cup \{ \mathrm{id}:M\to M \}.
\]
Furthermore,  the action group
$<\mathcal{G}>$ generated by $\mathcal{G}$ is defined by $<\mathcal{G}\bigcup \mathcal{G}^{-1}>^+$.
So, every action group is not more than an action semigroup.
Notice that $\lim_{r\to \infty} f^r_{\omega}\not\in<\mathcal{G}>^+$ where $f^r_{\omega_1,\dots,\omega_r}=f_{\omega_n}\circ f^{r-1}_{\omega_1,\dots,\omega_{r-1}}$. Also, we denote the reverse iteration by $\hat{f}^r_{\omega_1,\dots,\omega_r}=f_{\omega_1}\circ f^{r-1}_{\omega_2,\dots,\omega_r}$.

Let us recall that the subset $\mathcal{K}$ of $M$ is invariant with respect to an action semigroup
$\Gamma$ generated by $\mathcal{G}=\{g_{1},\ldots,g_{s}\}$ if
$$\mathcal{K} =\bigcup_{i=1}^sg_i(\mathcal{K}).$$
Also, $\Gamma$ is said to be \emph{minimal} if each closed invariant subset $A$ of $M$ with respect to $\Gamma$ is empty or coincides with $M$.

Observe that for ordinary dynamical system $(M,f)$, the minimality
of $f$ is equivalent to that of $f^{-1}$. This is not the case for
dynamical systems with several maps: there exists a minimal action semigroup
$<f_1,\dots, f_s>^+$ on the circle such that
$<f_1^{-1},\dots,f_s^{-1}>^+$ is not
minimal~\cite{1086.37026}.
\begin{definition}
Given an action semigroup  $\Gamma$ generated by $\mathcal{G}=\{g_{1},\ldots,g_{s}\}$ and a probability measure space $(M,\mathcal{M},\mu)$ which $\mu$ is quasi-invariant with respect to $\Gamma$. The measure $\mu$ is called \emph{ergodic } if for
every measurable set $B\in \mathcal{M}$ with
$$g^{-1}_i(B)=B;\ \ \ \forall \ i=1,\dots,s\ $$
we have that either $\mu(B)=0$ or $\mu(B)=1$.
\end{definition}
Here, to obtain term ergodic results for an action semigroup/group, we
need   just $C^{1+\alpha}$-regularity. In this regard, we begin by stating term ergodic result for an action semigroup.
\begin{mainthm}\label{thmA}Every boundaryless compact two dimensional manifold $M$ admits a finite set of $C^{1+\alpha}$-diffeomorphisms that generates a $C^{1+\alpha}$-robustly term ergodic action semigroup with respect to volume measure.
\end{mainthm}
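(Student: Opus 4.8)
The plan is to manufacture the required family by adjoining finitely many conformal-type contractions to an already $C^1$-robustly minimal action semigroup on the surface $M$, and then to deduce term ergodicity from a Lebesgue-density-point argument. First I would invoke \cite{ghs} (or \cite{hn}, which works on an arbitrary compact manifold) to fix a finite set $\mathcal{G}_0=\{g_1,\dots,g_{s_0}\}$ of $C^{\infty}$ diffeomorphisms of $M$ generating a $C^1$-robustly minimal action semigroup. Next I would fix a conformal atlas of $M$, choose finitely many closed conformal disks $D_1,\dots,D_k$ whose interiors cover $M$, and attach to each $D_j$ a $C^{1+\alpha}$ diffeomorphism $\phi_j$ of $M$ which, read in these charts near $\bigcup_i D_i$, is a conformal (M\"{o}bius-type) contraction sending a fixed large disk onto a small disk inside some $D_{i(j)}$. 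I would then take $\mathcal{G}=\mathcal{G}_0\cup\{\phi_1,\dots,\phi_k\}$ and claim that $<\mathcal{G}>^+$ is $C^{1+\alpha}$-robustly term ergodic with respect to $\vol$.

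The heart of the proof is that, since the $\phi_j$ are conformal on the relevant region, arbitrarily long compositions $\phi_{j_n}\circ\cdots\circ\phi_{j_1}$ have \emph{uniformly bounded distortion} on the $D_i$: this is the Koebe distortion theorem combined with $C^{1+\alpha}$-regularity (the H\"{o}lder exponent being exactly what makes the relevant distortion series converge), and this is the point at which two-dimensionality is genuinely used. Granting this, suppose $B$ is a measurable set of positive volume that is invariant in the sense appearing in the definition of term ergodicity; I want to conclude $\vol(B)=1$. I would pick a Lebesgue density point $x_0$ of $B$, use minimality of $\mathcal{G}_0$ to find $h\in<\mathcal{G}_0>^+$ with $h(x_0)$ interior to some $D_i$, and then post-compose $h$ with a suitable string of the contractions $\phi_j$ to obtain $\psi_n\in<\mathcal{G}>^+$ and radii $r_n\to 0$ such that $\psi_n$ carries $B(x_0,r_n)$, with distortion bounded independently of $n$, onto a set containing a disk of a fixed radius $\rho>0$. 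Invariance of $B$ propagates the density-point condition, so $\vol(B:B(y_n,\rho))\to 1$ along a convergent subsequence $y_n\to y$, whence $B$ has full measure in a disk of definite size; covering $M$ by finitely many forward images of that disk (minimality once more) and using invariance again gives $\vol(B)=1$. For the group statement I would run the same argument allowing the inverses in $\mathcal{G}^{-1}$.

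Robustness and non-removability should then follow from the openness of every ingredient: $C^1$-robust minimality of $\mathcal{G}_0$ is given; the conformality of the $\phi_j$ in the charts can be relaxed to the $C^{1+\alpha}$-open requirement that all finite compositions of the (perturbed) contractions have distortion bounded by a fixed constant on $\bigcup_i D_i$, which is stable under small $C^{1+\alpha}$ perturbations by the usual bounded-distortion estimate; and the finite combinatorial data ($h$, the disks, the indices $i(j)$) persists. Hence a whole $C^{1+\alpha}$-neighbourhood of $\mathcal{G}$ consists of generators of term ergodic action semigroups, and one checks that deleting any generator breaks either minimality or the covering, yielding non-removability.

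The hard part will be exactly this bounded-distortion control: for a general $C^{1+\alpha}$ surface diffeomorphism there is no reason for the derivative ellipses of long compositions to stay of bounded eccentricity, so one has to engineer the construction so that the ``zooming'' factor of every composition that matters is essentially conformal, and do so compatibly both with the minimal skeleton $\mathcal{G}_0$ and with the circle-packing configuration exploited later in the paper. Everything else is a fairly standard repackaging of the density-point/distortion scheme.
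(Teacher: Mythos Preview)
Your overall architecture matches the paper's: build a robustly minimal semigroup, adjoin a finite family of conformal-type contractions living on a chart, and then exploit uniform bounded distortion of arbitrary compositions (precisely where $C^{1+\alpha}$ enters) together with a Lebesgue-density-point argument. The paper realises the contractions concretely as linear similarities $x\mapsto \kappa A x$ with $A$ a rotation (so $Dh_i$ has a pair of complex conjugate eigenvalues and balls go to balls), obtains a Hutchinson attractor $\Delta$ with nonempty interior, and then extends to a robustly minimal family $\mathcal{G}$ via Corollary~\ref{cormin}; the bounded-distortion statement actually used (Lemma~\ref{prop-dist}) controls only the Jacobian determinant, which suffices for the volume ratios needed. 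So your ``Koebe/Möbius'' packaging is a cosmetic variant of the same mechanism.

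There is, however, a genuine direction error in your density-point step. You take $\psi_n\in<\mathcal{G}>^+$ to be $h$ post-composed with a string of the \emph{contractions} $\phi_j$ and then ask that $\psi_n$ carry the shrinking balls $B(x_0,r_n)$ onto something containing a disk of fixed radius $\rho$. A composition of contractions cannot do that; to blow a small ball up to macroscopic size you need the inverses $\phi_j^{-1}$, which lie in $<\mathcal{G}^{-1}>^+$, not in $<\mathcal{G}>^+$. This is exactly why the paper states and proves term ergodicity for $<\mathcal{G}^{-1}>^+$. Alternatively, and this is what the paper actually does, one runs the density argument in the contrapositive direction using only the contractions: for any $p\in\Delta^\circ$ and any small ball $J\ni p$ one finds a word $\hat{h}^{r_0}_\omega$ sending the fixed reference ball $U$ into $J$ with $\vol(\hat{h}^{r_0}_\omega(U))/\vol(J)$ bounded below; bounded distortion then gives
\[
\frac{\vol(\mathcal{B}^c\cap J)}{\vol(J)}\ \ge\ c\cdot\frac{\vol(\mathcal{B}^c\cap U)}{\vol(U)}\ >\ 0,
\]
so $p\notin DP(\mathcal{B})$, and by symmetry $p\notin DP(\mathcal{B}^c)$, contradicting Lemma~\ref{lemm2}. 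Either fix (passing to inverses, or replacing your blow-up by this contraction estimate) repairs your argument, and the remaining ingredients you list are essentially those of the paper. Your closing remark about ``non-removability'' upon deleting a generator is not part of Theorem~\ref{thmA} and the paper does not discuss it.
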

We recall that a property $\mathcal{P}$ holds $C^r$-robustly for action semigroup $\Gamma$ generated by $\mathcal{G}=\{g_{1},\ldots,g_{s}\}$ if it holds for action semigroup $\hat\Gamma$ generated by $\mathcal{F}=\{f_{1},\ldots,f_{s}\}$ whose elements are $C^r$-perturbations of elements of $\mathcal{G}$.

Since every action group is action semigroup the following corollary is an immediately consequence of Theorem \ref{thmA}.
\begin{corollary}\label{thmB}
Theorem \ref{thmA} is valid for action groups.
\end{corollary}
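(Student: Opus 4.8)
The plan is to read off the corollary from Theorem \ref{thmA} using only the defining relation $<\mathcal{G}>\,=\,<\mathcal{G}\cup\mathcal{G}^{-1}>^+$ between an action group and an action semigroup. Fix a boundaryless compact two-dimensional manifold $M$ and let $\mathcal{G}=\{g_1,\dots,g_s\}$ be the finite family of $C^{1+\alpha}$-diffeomorphisms produced by Theorem \ref{thmA}, so that the action semigroup $<\mathcal{G}>^+$ is $C^{1+\alpha}$-robustly term ergodic with respect to $\vol$. I claim that the same family $\mathcal{G}$ witnesses the corollary, i.e.\ that the action group $<\mathcal{G}>$ is $C^{1+\alpha}$-robustly term ergodic.

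The first step is the elementary remark that the collection of measurable sets entering the ergodicity condition does not change when inverses are adjoined to the generating set: since each $g_i$ is a bijection, $g_i^{-1}(B)=B$ holds for all $i$ exactly when $g_i(B)=B$ holds for all $i$, which is precisely the invariance condition attached to the symmetrized generating set $\mathcal{G}\cup\mathcal{G}^{-1}$. Hence the sets to be tested for the term ergodicity of $<\mathcal{G}>^+$ and of $<\mathcal{G}\cup\mathcal{G}^{-1}>^+=<\mathcal{G}>$ coincide, and $\vol$ is term ergodic for one of these semigroups if and only if it is term ergodic for the other.

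It remains to propagate robustness. If $\mathcal{F}=\{f_1,\dots,f_s\}$ is a $C^{1+\alpha}$-perturbation of $\mathcal{G}$, then, inversion being continuous in the $C^{1+\alpha}$ topology, $\mathcal{F}^{-1}$ is a $C^{1+\alpha}$-perturbation of $\mathcal{G}^{-1}$; thus $<\mathcal{F}>^+$ is term ergodic by the robustness clause of Theorem \ref{thmA}, and applying the previous step to $\mathcal{F}$ gives that $<\mathcal{F}>=<\mathcal{F}\cup\mathcal{F}^{-1}>^+$ is term ergodic. As $\mathcal{F}$ ranges over all $C^{1+\alpha}$-perturbations of $\mathcal{G}$, this says exactly that the action group $<\mathcal{G}>$ is $C^{1+\alpha}$-robustly term ergodic.

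Since the whole argument merely repackages Theorem \ref{thmA} through a tautological identity on invariant sets, there is no genuine obstacle; the only point deserving a moment's care is to verify that term ergodicity — and not merely ordinary ergodicity — is governed by the same family of invariant sets, so that the passage from the semigroup statement to the group statement is indeed automatic.
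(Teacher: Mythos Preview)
Your argument is correct and is exactly the elaboration the paper has in mind: the paper disposes of the corollary in a single line (``since every action group is an action semigroup, the corollary is an immediate consequence of Theorem~\ref{thmA}''), and your proof unpacks that line by noting that the invariant-set condition $g_i^{-1}(B)=B$ for the generators of $<\mathcal{G}>^+$ coincides with the condition for the symmetrized generating set of $<\mathcal{G}>$, and then carrying robustness along. There is no substantive difference in approach.
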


\section{Some new results about minimality}
First of all, we state some results about minimality on compact manifolds in any dimension.
\begin{lemma}\label{lemm1}
Let $M$ be a compact manifold and action semigroup generated by homeomorphisms $\mathcal{G}=\{g_{1},\ldots,g_{s}\}$ be a minimal. Then every invariance set respect to each $g_i$ for $i=1,\dots,s$ and its complement are dens in the whole space.
\end{lemma}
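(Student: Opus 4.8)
The plan is to deduce the statement directly from the definition of minimality by passing to closures. Fix a set $A\subseteq M$ that is invariant with respect to each generator, i.e.\ $g_i(A)=A$ (equivalently $g_i^{-1}(A)=A$, since the $g_i$ are homeomorphisms) for all $i=1,\dots,s$; we may assume $\emptyset\neq A\neq M$, as the conclusion is vacuous otherwise. The goal is then to show $\overline{A}=M$ and $\overline{M\setminus A}=M$.

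First I would note that a homeomorphism commutes with closure, so $g_i(\overline{A})=\overline{g_i(A)}=\overline{A}$ for every $i$; hence $\bigcup_{i=1}^{s}g_i(\overline{A})=\overline{A}$, which says precisely that $\overline{A}$ is a closed set invariant with respect to the action semigroup $\langle\mathcal{G}\rangle^{+}$ in the sense recalled above. Since $A\neq\emptyset$ we have $\overline{A}\neq\emptyset$, so minimality forces $\overline{A}=M$; that is, $A$ is dense. For the complement, the point is that a homeomorphism-invariant set has homeomorphism-invariant complement: each $g_i$ being a bijection of $M$ with $g_i(A)=A$ gives $g_i(M\setminus A)=M\setminus g_i(A)=M\setminus A$. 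Thus $M\setminus A$ satisfies the same hypothesis and is nonempty because $A\neq M$, so the previous argument applied to $M\setminus A$ yields $\overline{M\setminus A}=M$.

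I do not expect a genuine obstacle here; the claim is essentially a formal consequence of minimality. The only places needing care are that the hypothesis should be read as the two-sided equality $g_i(A)=A$ — mere forward invariance $g_i(A)\subseteq A$ would only give $\bigcup_i g_i(\overline{A})\subseteq\overline{A}$, which is not literally the invariance appearing in the definition — and the tacit nondegeneracy assumption $\emptyset\neq A\neq M$, without which the conclusion fails (e.g.\ for $A=\emptyset$). Both are automatically available in the intended application to ergodicity, where $A$ will typically be the density-point set of a measurable invariant set of intermediate measure, so the lemma gets used exactly in the form stated.
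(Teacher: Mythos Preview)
Your argument is correct and follows essentially the same route as the paper: pass to the closure using that homeomorphisms commute with closure, invoke minimality to get $\overline{A}=M$, then repeat for the complement using that bijections preserve complements of invariant sets. Your explicit remarks on the need for the two-sided equality $g_i(A)=A$ and the tacit nondegeneracy $\emptyset\neq A\neq M$ are accurate refinements that the paper leaves implicit.
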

\begin{proof}
Suppose that
$$g_i(\mathcal{B})=\mathcal{B};\ \ \ \ \forall\ i=1,\dots,s.$$
So we have
$$g_i(\overline{\mathcal{B}})=\overline{\mathcal{B}};\ \ \ \ \forall\ i=1,\dots,s.$$
When $\overline{\mathcal{B}}$ is closed and $<\mathcal{G}>^+$
is minimal on $M$ then $\overline{\mathcal{B}}=M$. On the other hand
 $g_i(\mathcal{B}^c)= \mathcal{B}^c$
for $i=1,\dots,s$, where $\mathcal{B}^c$ is complement of
$\mathcal{B}$. By minimality of $<\mathcal{G}>^+$ and
invariance of the subset $\mathcal{B}^c$ with respect to each
generator, one can have density of this subset in $M$; that is
$\overline{\mathcal{B}^c}=M.$
\end{proof}
It follows that
$$\mathcal{B}\cap B(x,r)\neq\emptyset\ \ \ \mathrm{and} \ \ \ \mathcal{B}^c\cap B(x,r)\neq\emptyset$$
for each real number  $r>0$ and $x\in M$.
\begin{lemma}\label{lemm2}
Under the assumption of lemma \ref{lemm1} on a compact Riemannian manifold $M$, density points of every invariance set respect to each $g_i$ for $i=1,\dots,s$, with positive volume is dens in the whole space.
\end{lemma}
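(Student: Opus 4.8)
Let $\mathcal{B}\subseteq M$ satisfy $g_i(\mathcal{B})=\mathcal{B}$ for all $i$ and $\vol(\mathcal{B})>0$; the claim is that $\mathcal{D}:=DP(\mathcal{B})$ is dense in $M$. The plan is to check that $\mathcal{D}$ is nonempty and invariant with respect to each $g_i$, and then to quote Lemma \ref{lemm1} (or simply to rerun its one-line argument: $\overline{\mathcal{D}}$ is closed and invariant, hence equals $M$ by minimality).

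Nonemptiness is free: by the Lebesgue density point theorem $\vol(\mathcal{B}\bigtriangleup\mathcal{D})=0$, so $\vol(\mathcal{D})=\vol(\mathcal{B})>0$ and in particular $\mathcal{D}\ne\emptyset$. (If $\vol(\mathcal{B})=1$ then $\mathcal{D}$ is already co-null, its complement has empty interior, and $\mathcal{D}$ is dense outright; so one may assume $0<\vol(\mathcal{B})<1$, though this is not needed.)

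The key step, and the main obstacle, is to show that each $g_i$ carries density points of $\mathcal{B}$ to density points of $g_i(\mathcal{B})$, whence $g_i(\mathcal{D})=DP(g_i(\mathcal{B}))=DP(\mathcal{B})=\mathcal{D}$, using that $g_i$ is a bijection and $g_i(\mathcal{B})=\mathcal{B}$. It is precisely here that topological invariance alone does not suffice: an arbitrary homeomorphism can destroy $\vol(\mathcal{B})$ or the density-point structure, so one genuinely uses that the $g_i$ are diffeomorphisms (bi-Lipschitz would already do). I would argue as follows. Fix $x\in\mathcal{D}$, put $y=g_i(x)$, and work in coordinate charts about $x$ and $y$; this is harmless, since there the Riemannian volume has continuous positive density with respect to Lebesgue measure and hence determines the same density points. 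Set $W_r=g_i^{-1}(B(y,r))$; then $x\in W_r$, $\diam W_r\to 0$ as $r\to 0$, and since $g_i$ is $C^1$ there are constants $0<c\le C$ with $W_r\subseteq B(x,Cr)$ and $\vol(W_r)\ge c\,r^{n}$ (where $n=\dim M$) for all small $r$. As $x$ is a density point of $\mathcal{B}$, $\vol(W_r\setminus\mathcal{B})\le\vol(B(x,Cr)\setminus\mathcal{B})=o(r^{n})$, hence $\vol(\mathcal{B}\cap W_r)/\vol(W_r)\to 1$. By the change-of-variables formula and continuity of the Jacobian of $g_i$ at $x$ (put $J:=|\det Dg_i(x)|>0$), one has $\vol(B(y,r))=\vol(g_i(W_r))=(J+o(1))\vol(W_r)$ and $\vol(g_i(\mathcal{B})\cap B(y,r))=\vol(g_i(\mathcal{B}\cap W_r))=(J+o(1))\vol(\mathcal{B}\cap W_r)$ as $r\to 0$; dividing these yields $\vol(g_i(\mathcal{B})\cap B(y,r))/\vol(B(y,r))\to 1$, that is, $y\in DP(g_i(\mathcal{B}))$. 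The reverse inclusion follows from the same computation applied to $g_i^{-1}$, so $g_i(\mathcal{D})=\mathcal{D}$ for every $i$.

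With $\mathcal{D}$ nonempty and $g_i$-invariant for each $i$, the set $\overline{\mathcal{D}}$ is a nonempty closed subset with $g_i(\overline{\mathcal{D}})=\overline{\mathcal{D}}$, so minimality of $<\mathcal{G}>^+$ forces $\overline{\mathcal{D}}=M$, which is the assertion. Beyond the regularity point flagged above, the only care needed is uniformity of the $o(1)$ terms as $r\to 0$: this follows since the Lebesgue density theorem is insensitive to replacing the balls $B(x,r)$ by the regular family $\{W_r\}$ --- each squeezed between concentric balls of comparable radii --- together with uniform closeness of $|\det Dg_i|$ to $J$ on the shrinking sets $W_r$.
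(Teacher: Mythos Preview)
Your proof is correct, but it proceeds along a different line from the paper's. You establish directly that $\mathcal{D}=DP(\mathcal{B})$ is itself $g_i$-invariant for each $i$, by checking that $C^1$-diffeomorphisms send density points to density points, and then feed $\mathcal{D}$ into the minimality argument of Lemma~\ref{lemm1}. The paper instead argues by contradiction: if $DP(\mathcal{B})$ missed an open ball $B(x_0,r_0)$, then by the Lebesgue density theorem $\vol(B(x_0,r_0)\cap\mathcal{B})=0$; quasi-invariance of the volume under $C^1$-diffeomorphisms then propagates this null intersection along the (inverse) $\mathcal{G}$-orbit of the ball, and minimality forces those iterates to cover $M$, contradicting $\vol(\mathcal{B})>0$. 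The paper's route is a little lighter---it only needs absolute continuity of $(g_i)_\ast\vol$ with respect to $\vol$, not the finer pointwise fact that density points are preserved---while your approach is more structural, singling out $DP(\mathcal{B})$ as an honest invariant set, which can be a useful observation in its own right. Both arguments, of course, rely essentially on the $C^1$ (or bi-Lipschitz) regularity of the generators, a point you rightly flag.
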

\begin{proof}
Suppose that
$0<\vol(\mathcal{B})$ and
$$g_i(\mathcal{B})=\mathcal{B};\ \ \ \ \forall\ i=1,\dots,s.$$
If $DP(\mathcal{B})$ is not dense then there exists a neighborhood
$B(x_0,r_0)$ for some point $x_0$ of $M$ so that $B(x_0,r_0)\cap
DP(\mathcal{B})=\emptyset $. By Lebesgue density point theorem, one
can have $\vol(B(x_0,r_0)\cap \mathcal{B})=0$, equivalently
$\vol(B(x_0,r_0)\cap\mathcal{B}^c)=\vol(B(x_0,r_0))$.
We remark that the volume measure is a quasi-invariant for any $C^1$-diffeomorphism. So, the
union of iterates $B(x_0,r_0)\cap
\mathcal{B}$ under $\mathcal{G}$  has zero volume measure.
Now,  the assumption of minimality $<\mathcal{G}>^+$ and invariance of $\mathcal{B}^c$ under $\mathcal{G}$ yields a contradiction with $0<\vol(\mathcal{B})$. Hence
$\overline{DP(\mathcal{B})}=M$.
\end{proof}
Notice that, similarly, when $0<\vol(\mathcal{B})<1$ both the subsets $DP(
\mathcal{B})$ and $DP(
\mathcal{B}^c)$ are dense.

\begin{lemma}\label{minimal}
Let $M$  be a compact connected two dimensional manifold. Then there exist a set $\mathcal{H}=\{h_1,\dots,h_k\}$ of
$C^1$-diffeomorphisms on $M$ and invariant set $\Delta$ with respect to $\mathcal{H}$ and nonempty interior  so that
$<\mathcal{H}>^+$ is minimal on $\Delta$ and for every $x\in \Delta$ and $i=1,\dots,k$, $Dh_i$ at $x$ have two complex eigenvalues.
\end{lemma}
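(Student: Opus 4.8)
The strategy is to build the system from a single linear model and then spread it around the manifold. First I would work on the standard torus $\mathbb{T}^2=\mathbb{R}^2/\mathbb{Z}^2$, where a hyperbolic rotation-type matrix is unavailable but a matrix with two complex (non-real) eigenvalues of modulus one — for instance a conjugate of an irrational rotation, or more robustly a matrix in $\mathrm{SL}(2,\mathbb{Z})$ with trace in $(-2,2)$ such as $\begin{pmatrix}0&-1\\1&0\end{pmatrix}$ or $\begin{pmatrix}1&-1\\1&0\end{pmatrix}$ (order $4$ and $6$ respectively) — descends to a diffeomorphism of $\mathbb{T}^2$ whose derivative is constant and has two complex eigenvalues everywhere. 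These finite-order examples are rigid, so to get minimality of a semigroup I would instead take two or more such diffeomorphisms together with small conjugations/compositions so that the generated semigroup acts minimally; the point is that having complex eigenvalues is a $C^1$-open condition on the derivative, so it survives the perturbations needed to arrange minimality. Concretely, pick $h_1$ to be an affine Anosov-free map of $\mathbb{T}^2$ with complex eigenvalues and $h_2$ a small perturbation of another such map chosen (as in the constructions of \cite{1002.37017,ghs,hn} adapted to the elliptic setting) so that $\langle h_1,h_2\rangle^+$ has a dense orbit everywhere; since the derivatives stay near rotations, the eigenvalue condition persists.

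The next step handles a general compact connected two-dimensional $M$. I would fix a closed disk (or, better, an embedded closed annulus or a chart diffeomorphic to a subset of $\mathbb{T}^2$) $\Delta\subset M$ with nonempty interior on which the dynamics will live, and transplant the torus construction into $\Delta$ via a coordinate chart, modifying the maps outside a slightly smaller disk by a bump-function cutoff so that each $h_i$ extends to a global $C^1$-diffeomorphism of $M$ that equals the identity near $\partial\Delta$ and maps $\Delta$ into itself. Then $\Delta$ is invariant with respect to $\mathcal{H}=\{h_1,\dots,h_k\}$, it has nonempty interior, and on the core region the restriction of each $Dh_i$ is exactly the transplanted linear part, hence has two complex eigenvalues; on the collar one arranges the cutoff so the derivative is a small perturbation of a complex-eigenvalue matrix and therefore still has complex eigenvalues. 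Minimality of $\langle\mathcal{H}\rangle^+$ on $\Delta$ is inherited from the torus model because the chart conjugates the two systems on the interior and the behaviour near the boundary can be absorbed by adding a couple of extra generators that are contractions of $\Delta$ toward the core (a standard trick to kill the collar and make every orbit approach the minimal core), again chosen with complex-eigenvalue derivatives.

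The main obstacle is the tension between minimality and the eigenvalue constraint. Minimality forces the semigroup to be "expanding in every direction on average", which is most easily achieved with hyperbolic or conformal-expanding maps whose derivatives have real eigenvalues, exactly what we must forbid; and a single diffeomorphism whose derivative is everywhere elliptic (complex eigenvalues, hence determinant-bounded, volume-quasi-preserving) cannot be minimal by itself, so minimality has to come from the interplay of several such maps — essentially a random-rotation picture that is minimal but not uniquely ergodic in the classical sense. I expect the bulk of the technical work, and the part most likely to need care, to be the verification that the elliptic generators can be combined to act minimally on $\Delta$ while keeping all derivatives in the $C^1$-open set of matrices with non-real eigenvalues, and that the cutoff near $\partial\Delta$ does not destroy either property. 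The density lemmas (Lemmas \ref{lemm1} and \ref{lemm2}) are not used here but are the reason this lemma is stated: it is the minimal, complex-eigenvalue ingredient feeding into the proof of Theorem \ref{thmA}.
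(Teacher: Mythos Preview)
Your plan has a structural gap. You cut off each transplanted map to equal the identity near $\partial\Delta$; but then $Dh_i=\mathrm{Id}$ on that collar, and the identity has the real repeated eigenvalue $1$, so the complex-eigenvalue hypothesis fails on a nonempty part of $\Delta$. This cannot be repaired by ``arranging the cutoff'': a diffeomorphism that is literally the identity on an open set has derivative $\mathrm{Id}$ there, full stop. The extra contractions toward the core do not rescue the eigenvalue condition for the \emph{other} generators, and the lemma requires complex eigenvalues for every $i$ and every $x\in\Delta$. There is also a topological mismatch underneath: a system minimal on all of $\mathbb{T}^2$ cannot be restricted to a proper subdisk and remain minimal, so the torus model is not transplantable in the way you describe.

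The conceptual slip is the belief that minimality of a semigroup needs some form of expansion, which pushed you toward elliptic (modulus-one) derivatives. For semigroups this is false: a finite family of uniform contractions is automatically minimal on its Hutchinson attractor, and that attractor has nonempty interior as soon as the images of a ball cover the ball. This is precisely the paper's route. It fixes a scaled rotation $T(x)=\kappa A x$ with $A$ a rotation by an angle near $\pi$ and $3/4<\kappa<1$, takes finitely many affine conjugates $S_j(x)=T(x-y_j)+y_j$ so that $T(V)\cup\bigcup_j S_j(V)\supset\overline V$ for a small ball $V$, and lets $\Delta$ be the resulting attractor; every derivative equals $\kappa A$ and hence has two non-real eigenvalues. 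Globalization to $M$ is achieved not by bump cutoffs but by realizing each contraction as the time-one map of a gradient Morse--Smale flow with a unique sink, adjusted in a chart to agree with $T$ (resp.\ $S_j$) on a ball containing $\Delta$; since $\Delta$ lies in the interior of the contracting region, no boundary issues arise.
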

\begin{proof}
Let $A$ be rotation matrix by angle $\theta=179^\circ$. Define a linear map $T$ as the follows,
$$
T(x,y)=\kappa\cdot A(x,y);\ \ \ \text{for ever} (x,y)\in\mathbb{R}^2,
$$
where $3/4<\kappa<1$. The choice  of $\theta$ insure that each eigenvalues of $T$ at each point is complex.  Put $V=B(0,\delta)$. Clearly, $T(\overline{V})\subset V$ and for every $x\in \overline{V}$, $DT$ at $x$ have two complex eigenvalues $\lambda,\overline{\lambda}$ with $|\lambda|=|\overline{\lambda}|>3/4$.
For every $y\in W$, define
$$
S_y(x)=T(x-y)+y:\ \ \forall\ x\in V
$$
where $W=\{x\in V:\ |x|=\frac{3}{4}\delta\}$. Observe that, by construction of $T$ and $S_y$, the following set
$$
T(V)\bigcup\ [\bigcup_{y\in W}S_y(V)]
$$
is a cover for $\overline{V}$.
Therefor, it has a finite subcover as $\overline{V}\subset T(V) \bigcup\ [\bigcup_{i=1}^{k-1} S_i(V)].$ Take $\mathcal{H}_0=\{T,S_1,\dots,S_{k-1}\}$.
Since every element of $\mathcal{H}_0$  is contractions, there is a ball $U$ that is mapped into itself by $T$ and $S_1,\dots,S_{k-1}$, i.e., $\mathcal{H}_0(U)=T(U) \bigcup\ [\bigcup_{i=1}^{k-1} S_i(U)]\subset U$ (when $\kappa$ is very close to $3/4$ one can take $U=B(0,16\delta)$ to have this property). Thus
$$
\lim_{n\to \infty}\mathcal{H}_0^n(U)=\lim_{n\to \infty}\mathcal{H}_0^{n-1}[\mathcal{H}_0(U)]=\Delta
$$
is  the unique non-empty invariant set with respect to $\mathcal{H}_0$ which the interior of it is non-empty. Notice that the semigroup generated by $\mathcal{H}_0$ on $\Delta$ is minimal.

Take a gradient Morse-Smale vector fields $\dot{x}= \nabla F_i(x)$ on $M$ with a unique hyperbolic attracting equilibrium $p_i$ , for $i=1,\dots,k$ (see e.g. Theorem 3.35 of \cite{m},  for the
existence of Morse functions $F$ with unique extrema) and let $h_i$ be its time-1 map. We may assume that
each $p_i$ belong to an open neighborhood  $\hat{V}$ and the each eigenvalue of $Dh_i(p_i)$ are complex.

Now, working in a coordinate chart on a small open neighborhood $\hat{V}\subset\mathbb{R}^2$ and $U\subset \hat{V}$.
One can assume that $h_1=T$ and $h_i=S_i$ on $U$ for $i=2,\dots,k$. The action semigroup generated by
$\mathcal{H}=\{h_1,\dots,h_k\}$ is minimal on the set $\Delta$.
\end{proof}

Since the construction used in Lemma~\ref{minimal} is $C^1$-robust, by the similar argument used in \cite{ghs} and \cite{hn}, we can have  a finite extension $\mathcal{G}$ of $\mathcal{H}$ so that
$<\mathcal{G}>^+$ is $C^1$-robust minimal on $M$.
\begin{corollary}\label{cormin}
There is  finite extension of  $\mathcal{H}$ to finite set $\mathcal{G}$ which $<\mathcal{G}>^+$ is $C^1$- robustly minimal.
\end{corollary}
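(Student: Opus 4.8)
The plan is to run the local-to-global mechanism of \cite{ghs,hn}. Lemma~\ref{minimal} provides the finite set $\mathcal{H}=\{h_1,\dots,h_k\}$ and an invariant set $\Delta$ with non-empty interior on which $<\mathcal{H}>^{+}$ is minimal; inspecting that proof, $\Delta$ in fact contains the ball $V$, since $\overline{V}\subset\bigcup_i h_i(V)$ together with $\mathcal{H}_0(\overline{U})\subset U$ forces $\overline{V}\subset\mathcal{H}_0^{\,n}(U)$ for every $n$, and both inclusions are $C^1$-open, so a fixed concentric ball $B_0\subset V$ stays in the interior of the invariant set under $C^1$-small perturbations of $\mathcal{H}$. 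The idea is then to enlarge $\mathcal{H}$ by finitely many diffeomorphisms that carry every point of $M$ into $B_0$: for each $y\in M$ pick a $C^1$-diffeomorphism $\psi_y$ of $M$, isotopic to the identity and supported in a chart, sending the closure of some open neighbourhood of $y$ into $B_0$, and by compactness extract a finite subfamily $\psi_1,\dots,\psi_q$ whose associated open sets $N_1,\dots,N_q$ cover $M$ and satisfy $\psi_l(\overline{N_l})\subset B_0$. Put
\[
  \mathcal{G}\;=\;\mathcal{H}\cup\{\psi_1,\psi_1^{-1},\dots,\psi_q,\psi_q^{-1}\},
\]
a finite extension of $\mathcal{H}$.

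For the minimality of $<\mathcal{G}>^{+}$ I would argue as follows. Let $K\subset M$ be non-empty, closed and invariant with respect to $\mathcal{G}$. Since each $\psi_l^{-1}$ is also a generator, invariance gives $\psi_l(K)=K$ for all $l$ and $g(K)\subset K$ for all $g\in<\mathcal{G}>^{+}$. Pick $x\in K$ and $l$ with $x\in N_l$, so $z:=\psi_l(x)\in B_0\subset\Delta$ and $z\in\psi_l(K)=K$; hence $z\in K\cap\Delta$. The forward $<\mathcal{H}>^{+}$-orbit of $z$ remains in the closed set $K\cap\Delta$ and, by minimality of $<\mathcal{H}>^{+}$ on $\Delta$, is dense in $\Delta$; thus $\Delta\subset K$. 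Finally, any $y\in M$ lies in some $N_l$, so $\psi_l(y)\in B_0\subset\Delta\subset K$, whence $y=\psi_l^{-1}(\psi_l(y))\in\psi_l^{-1}(K)\subset K$. Therefore $K=M$, and $<\mathcal{G}>^{+}$ is minimal.

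For robustness I would perturb every generator in the $C^1$-topology. The inclusions $\overline{V}\subset\bigcup_i h_i(V)$ and $\mathcal{H}_0(\overline{U})\subset U$ are $C^1$-open and contractions persist, so the perturbed family $\widetilde{\mathcal{H}}$ still admits an invariant set $\widetilde{\Delta}$ with $B_0$ in its interior, and---exactly as engineered in the overlapping ``blending'' constructions of \cite{ghs,hn}---$<\widetilde{\mathcal{H}}>^{+}$ is still minimal on $\widetilde{\Delta}$. The finitely many conditions $\psi_l(\overline{N_l})\subset B_0$, with the fixed cover $\{N_l\}$, are $C^0$-open, hence survive $C^1$-small perturbations of the $\psi_l$ and automatically of the $\psi_l^{-1}$. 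Rerunning the second paragraph with $\widetilde{\Delta}$, $B_0$ and the perturbed maps then gives minimality of the perturbed semigroup, so $<\mathcal{G}>^{+}$ is $C^1$-robustly minimal. The one genuinely non-formal ingredient---and the step I expect to be the main obstacle---is the $C^1$-robustness of minimality of the local system $<\mathcal{H}>^{+}$ on $\Delta$, as opposed to the mere persistence of an invariant set with non-empty interior; for this I would invoke the argument of \cite{ghs,hn}, the rest being routine and topological.
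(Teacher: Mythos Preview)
Your proposal is correct and follows the same approach as the paper. The paper offers no argument beyond the sentence preceding the corollary---namely that the construction in Lemma~\ref{minimal} is $C^1$-robust and that the extension to a $C^1$-robustly minimal semigroup then follows ``by the similar argument used in \cite{ghs} and \cite{hn}''; you have spelled out one concrete instance of that local-to-global mechanism (adding finitely many chart diffeomorphisms and their inverses to carry every point into a fixed ball inside $\Delta$), and you correctly isolate the only substantive step---the $C^1$-robust minimality of $<\mathcal{H}>^{+}$ on $\Delta$---and defer it to the same references the paper invokes.
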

\section{Robustly term ergodic: Proof of Theorem \ref{thmA}}
We used the following lemma to prove of main theorem.
\begin{lemma}[Bounded distortion in the Hutchinson attractor]
\label{prop-dist}
Consider a finite family
$
\mathcal{H}=\{h_1,\ldots,h_k\} \subset \mathrm{Hom}(M)
$
where each $h_i$ is a contracting $C^{1+\alpha}$-map of the closure of an open set $D \subset M$. Then, there exists $L_{\mathcal{H}}>0$ such that for every
$n\in\mathbb{N}$ and $\omega\in \Sigma_k^+$,
$$
L_{\mathcal{H}}^{-1}<\bigg|\frac{\det(D\hat{h}^n_\omega(x))}{\det(D\hat{h}^n_\omega(y))}\bigg|<L_{\mathcal{H}} \quad \text{for all $x,y\in \triangle_{\mathcal{H}}$}
$$
where $\triangle_\mathcal{H}$ is the Hutchinson attractor of $\IFS(\mathcal{H})$ in $\overline{D}$.
\end{lemma}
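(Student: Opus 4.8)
\emph{Proof idea.}
The plan is to pass to logarithms and use the contraction to turn the relevant telescoping sum into a convergent geometric series. First I would fix $\lambda \eqdef \max_{1\le i\le k}\mathrm{Lip}\bigl(h_i|_{\overline D}\bigr) < 1$ and $R\eqdef\diam(\triangle_{\mathcal{H}})$, which is finite since $\triangle_{\mathcal{H}}\subseteq\overline D$ is compact. Because each $h_i$ is $C^{1+\alpha}$ on the compact set $\overline D$ with $Dh_i$ non-degenerate there (so that the determinant ratios are meaningful), the function $\varphi_i\eqdef\log|\det Dh_i|$ is well defined, bounded, and $\alpha$-Hölder on $\overline D$; let $C_{\mathcal{H}}$ denote the largest of the $\alpha$-Hölder constants of $\varphi_1,\dots,\varphi_k$.

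Next, for $\omega=(\omega_1,\omega_2,\dots)\in\Sigma_k^+$ and $n\in\mathbb N$ we have $\hat h^n_\omega=h_{\omega_1}\circ h_{\omega_2}\circ\cdots\circ h_{\omega_n}$, and for $1\le j\le n$ I introduce the intermediate point $y_j(z)\eqdef h_{\omega_{j+1}}\circ\cdots\circ h_{\omega_n}(z)$, with the convention $y_n(z)=z$. The chain rule gives $\det D\hat h^n_\omega(z)=\prod_{j=1}^n\det Dh_{\omega_j}(y_j(z))$, so
$$
\left|\log\left|\frac{\det D\hat h^n_\omega(x)}{\det D\hat h^n_\omega(y)}\right|\right|
\;\le\;\sum_{j=1}^n\bigl|\varphi_{\omega_j}(y_j(x))-\varphi_{\omega_j}(y_j(y))\bigr|
\;\le\; C_{\mathcal{H}}\sum_{j=1}^n d\bigl(y_j(x),y_j(y)\bigr)^{\alpha}.
$$
Here I use that $x,y\in\triangle_{\mathcal{H}}$ forces $y_j(x),y_j(y)\in\triangle_{\mathcal{H}}\subseteq\overline D$, because $h_i(\triangle_{\mathcal{H}})\subseteq\triangle_{\mathcal{H}}$ for every $i$, so every $\varphi_{\omega_j}$ is being evaluated where it is Hölder.

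Then I would insert the contraction estimate: $y_j$ is a composition of the $n-j$ contractions $h_{\omega_{j+1}},\dots,h_{\omega_n}$, hence $d(y_j(x),y_j(y))\le\lambda^{\,n-j}d(x,y)\le\lambda^{\,n-j}R$. Substituting and re-indexing by $m=n-j$,
$$
\left|\log\left|\frac{\det D\hat h^n_\omega(x)}{\det D\hat h^n_\omega(y)}\right|\right|
\;\le\; C_{\mathcal{H}}R^{\alpha}\sum_{m=0}^{n-1}\lambda^{\alpha m}
\;\le\;\frac{C_{\mathcal{H}}R^{\alpha}}{1-\lambda^{\alpha}},
$$
a bound independent of $n$ and $\omega$. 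Exponentiating yields the claimed two-sided estimate with $L_{\mathcal{H}}=\exp\!\bigl(C_{\mathcal{H}}R^{\alpha}/(1-\lambda^{\alpha})\bigr)$.

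The computation is otherwise routine; the one point that needs care is the bookkeeping for the reverse iteration, namely that in $\hat h^n_\omega$ the factor $Dh_{\omega_j}$ is evaluated at the image of the argument under the remaining $n-j$ innermost maps, so that each Hölder increment is damped by $\lambda^{\alpha(n-j)}$ rather than left undamped — this is precisely why $\hat h^n_\omega$, not the forward composition, is the natural object here. A minor preliminary point is verifying that $\varphi_i$ is genuinely $\alpha$-Hölder: $Dh_i$ is $\alpha$-Hölder by hypothesis, $\det$ is polynomial hence locally Lipschitz, and $|\det Dh_i|$ is bounded away from $0$ and $\infty$ on the compact $\overline D$, so post-composing with $\log$ preserves the $\alpha$-Hölder regularity.
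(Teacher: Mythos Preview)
Your argument is correct and follows essentially the same route as the paper: take logarithms, expand $\log|\det D\hat h^n_\omega|$ via the chain rule into a sum of $\varphi_{\omega_j}$'s, apply the $\alpha$-H\"older bound, and control the resulting increments by the geometric series $\sum \lambda^{\alpha m}\,R^\alpha$. Your bookkeeping for where each $Dh_{\omega_j}$ is evaluated (the inner composition $y_j$) is in fact cleaner than the paper's, and your explicit justification that $\varphi_i=\log|\det Dh_i|$ inherits $\alpha$-H\"older regularity is a small addition the paper leaves implicit.
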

\begin{proof}
Define $\Phi:\textrm{GL}(\textrm{dim}(M),\mathbb{R})\to\mathbb{R}$
by $\Phi(A)=\log|\det(A)|$ and $\digamma_i(x)=\Phi(Dh_{i}(x))$, for
any $i=1,\ldots,k$. Note that
by assumption, $\log |\det Dh_i|$ is $\alpha$-H\"older
and thus for every
$x,y\in\triangle_{\mathcal{H}}$ and $1\leq i\leq k$
\[|\digamma_i(x)-\digamma_i(y)|\leq C\|x-y\|^\alpha,\]
for some constants $C>0$. On the other hand, for every $\omega=\omega_1\omega_2\dots\in\Sigma_k^+$,
\[\|\hat{h}^n_\omega(x)-\hat{h}^n_\omega(y)\|\leq
\|Dh_{\omega_1}\|~\|\hat{h}^{n-1}_{\omega_2,\dots,\omega_n}(x)-\hat{h}^{n-1}_{\omega_2,\dots,\omega_n}(y)\|\leq
\xi^n\|x-y\|\leq \xi^n\textrm{diam}(\triangle_{\mathcal{H}}),\] where
$$
\xi=\sup_{x\in \triangle_\mathcal{H},\ 1\leq i\leq
k}\|Dh_{i}(x)\|<1.
$$
Hence,
\begin{align*}
\log\frac{|\det\big(D\hat{h}^n_\omega(x)\big)|}{|\det\big(D\hat{h}^n_\omega(y)\big)|}
&=\sum_{i=0}^{n-1}|\digamma_{\omega_i}(\hat{h}^i_\omega(x))-\digamma_{\omega_i}(\hat{h}^i_\omega(y))|
\leq C \sum_{i=0}^{n-1}
\|\hat{h}^i_\omega(x)-\hat{h}^i_\omega(y)\|^\alpha \\ &\leq C
\sum_{i=0}^{n-1}\{\xi^i\textrm{diam}(\triangle_{\mathcal{H}})\}^\alpha
\leq CM
(\textrm{diam}(\triangle_{\mathcal{H}}))^\alpha\sum_{i=0}^{\infty}(\xi^\alpha)^i.
\end{align*}
Taking $L_{\mathcal{H}}=\exp\{C \xi^\alpha
(\textrm{diam}(\triangle_{\mathcal{H}}))^\alpha/ (1-\xi^\alpha) \}$
the desired inequality holds.
\end{proof}
Now, we will prove Theorem \ref{thmA}.
\begin{proof}[Proof of Theorem~\ref{thmA}]
Suppose that $M$ is two dimensional manifold. Consider Lemma \ref{minimal} for $C^{1+\alpha}$-diffeomorphisms  $\mathcal{H}=\{h_1,\dots,h_k\}$, which will provide an invariant set
$\Delta$ with nonempty interior respect to action semigroup $<\mathcal{H}>^+$
so that the $Dh_i(x)$ have complex eigenvalue for each $i=1,\dots,k$ and $x\in \Delta^\circ$.
Take the subset $U$ of $M$ so that is mapped into itself by $h_i$, i.e., $\mathcal{H}(U)=\bigcup_{i=1}^{k} h_i(U)\subset U$ and  also take $<\mathcal{G}>^+$ is a finite $C^{1+\alpha}$-extension of $\mathcal{H}$ which is $C^{1+\alpha}$-robustly minimal on $M$ (see Corollary \ref{cormin}).

We claim that $<\mathcal{G}^{-1}>^+$ is term ergodic. To this end, suppose that $0<\vol(\mathcal{B})<1$  and
$$g_i(\mathcal{B})=\mathcal{B};\ \ \ \ \forall\ i=1,\dots,s.$$

Now, let $p\in \Delta^\circ$ and  $J=B(p,\hat{\delta})\subset \Delta^\circ$.
Since  $Dh_i(x)$ have two complex eigenvalue $\lambda_i,\overline{\lambda_i}$ for every $x\in U$ and $i=1,\dots,k$,
the image of an open ball under the apply of $h_i$ is a ball, too.

On the other hand, there is
$\omega\in \Sigma^+_k$ so that $\bigcap_r\hat{h}^r_\omega(U)=\{p\}$ and $\lim_{r\to\infty}\text{diam}(\hat{h}^r_\omega(U))\to 0$.
So, $\hat{h}^r_\omega(U) $ is a ball for each $r$ and  $\text{diam}(\hat{h}^r_\omega(U)\to 0$ as $r\to \infty$. Define
$$
r_0=\min\{r:\ \text{diam}(\hat{h}^r_\omega(U))<\delta\}.
$$
Clearly, $p\in \hat{h}^{r_0}_\omega(U)\subset B(p,\delta)$. Since $\hat{h}^{r_0}_\omega(U)$ is a ball with  $\text{diam}(\hat{h}^{r_0}_\omega(U))<\delta$ and $\text{diam}(\hat{h}^{r_0-1}_\omega(U))\geq\delta$, then
$$
\text{diam}(h_i(B(p,\delta/2)))\leq\text{diam}(\hat{h}^{r_0}_\omega(U))<\delta
$$
and
$$
\frac{\vol(h_i(B(p,\delta/2)))}{\vol(J)} \leq\frac{ \vol(h_{\omega}^{r_0}(U))}{\vol(J)}.
$$
Notice that  $h^{r_0}_\omega(U)\subset J$ and $h_i(\mathcal{B}^c)=\mathcal{B}^c$ for every $i=1,\dots, k$ when $\mathcal{H}$ is a subset of $\mathcal{G}$. So, one can have $h_\omega^{r_0}(\mathcal{B}^c\bigcap U)\subseteq \mathcal{B}^c\bigcap h_\omega^{r_0}(U)$.

Hence, we have
\begin{align*}
\frac{\vol(\mathcal{B}^c\bigcap J)}{\vol(J)}&\geq
\frac{\vol(\mathcal{B}^c\bigcap \hat{h}_{\omega}^{r_0}(U))}{\vol(J)}\\
&\geq \frac{\vol(\mathcal{B}^c\bigcap \hat{h}_{\omega}^{r_0}(U))}{\vol(\hat{h}_{\omega}^{r_0}(U))}.
\frac{\vol(\hat{h}_{\omega}^{r_0}(U))}{\vol(J)}\\
&> \frac{\vol(\hat{h}_{\omega}^{r_0}(\mathcal{B}^c\bigcap U))}{\vol(\hat{h}_{\omega}^{r_0}(U))}\cdot\frac{\vol(h_i(B(p,\delta/2)))}{\vol(J)}.
\end{align*}
On the other hand  $\mathcal{B}^c$ is forward $\mathcal{H}$-invariant,
\begin{align*}
\frac{\vol(\hat{h}^{r_0}_\omega(\mathcal{B}^c\cap U))}{\vol(\hat{h}_{\omega}^{r_0}(U))} \geq L_\mathcal{H}^{-1}\, \frac{\vol(\mathcal{B}^c\cap U)}{\vol(U)}
\end{align*}
Indeed, the last inequality is implied
by the bounded distortion result,
Lemma~\ref{prop-dist}. Indeed, since
\begin{align*}
\vol(\hat{h}^{r_0}_\omega(\mathcal{B}^c\cap U))&=\int_{\mathcal{B}^c\cap U}
|\det D\hat{h}^{r_0}_\omega | \,d\vol\geq
 \vol(\mathcal{B}^c\cap U)
\inf_{x\in \Delta} |\det D\hat{h}^{r_0}_\omega(x)| , \\
\vol(\hat{h}^{r_0}_\omega(U))&=\int_{U}
|\det \hat{h}^{r_0}_\omega|\,d\vol\leq  \vol(U) \sup_{x\in \Delta}|\det D\hat{h}^{r_0}_\omega(x)|
\end{align*}
This  means that $\frac{\vol(\mathcal{B}^c\bigcap J)}{\vol(J)}$ is bounded from below for every neighborhood $J$ of $p$. So, $p\not\in DP(\mathcal{B})$. Similarly, One can prove that $p\not\in DP(\mathcal{B}^c)$.
Hence $$\Delta^\circ\bigcap(DP(\mathcal{B})\bigcup DP(\mathcal{B}^c))=\emptyset$$
which is a contradiction with Lemma \ref{lemm2} and the proof is completed.
\end{proof}
Observe that ergodicity may be removed from some ordinary dynamical
systems under a perturbation which an irrational translation is such
a system. But the following example, containing irrational
translation, is robust term ergodic. Moreover, it is shown that
sufficiently close to such system in the $C^{1+\alpha}$-topology, there is a
term ergodic action semigroup which each of generators are
not ergodic.
\begin{example}Suppose
$f_1$ is a north-south pole $C^{2}$-diffeomorphism of the circle
$\mathbb{S}^1$ possessing an attracting fixed point $p$ as a north
pole and a repelling fixed point $q$ as south pole with multipliers
$$1/2<f_1^\prime(p)<1\ \ \ \ and\ \  \ \ 1/2<(f_1^{-1})^\prime(q)<1.$$
Consider the map $f_2=R_\lambda$ where $R_\lambda$ is the rotation
by irrational angle $\lambda$ on the circle. Observe that, both
systems $<f_1,f_2>^+$ and $<f_1^{-1},f_2^{-1}>^+$  are $C^{1+\alpha}$-
robust minimal~\cite{1002.37017}. Thus the systems $<f_1,f_2>^+$
and $<f_1^{-1},f_2^{-1}>^+$ are $C^{1+\alpha}$-robust term ergodic.
 Moreover, take a rational number $\gamma$
sufficiently close to $\lambda$ so that the system
$<f_1,R_{\gamma}>^+$ is minimal, too. Clearly, none of
$f_1,f_1^{-1},R_{\gamma},R_{\gamma}^{-1}$ is neither ergodic nor minimal but,
$<f_1,R_{\gamma}>^+$ is both of minimal and term ergodic.
\end{example}
\section{A problem on circle packing concerning to ergodicity }
Let $M$ be a compact $2$-dimensional manifold and $\Gamma=<\mathcal{G}\bigcup\mathcal{G}^{-1}>^+$ be a minimal action group generated by homeomorphisms $\mathcal{G}=\{g_{1},\ldots,g_{s}\}$.
Suppose that $\mathcal{B}$ is an invariant set with respect to each generator of $\Gamma$, that is
$$g_i(\mathcal{B})=\mathcal{B};\ \ \ \ \forall\ i=1,\dots,s.$$
Assume that $0<\vol(\mathcal{B})<1$. By Lemmas \ref{lemm1} and \ref{lemm2}, we insure that $\mathcal{B},\ \mathcal{B}^c,\ DP(\mathcal{B}),\ DP(\mathcal{B}^c)$ are dense in $M$.

Now, let $y$ be an arbitrary point of $DP(\mathcal{B})$. By
definition of density point, one can find $\delta>0$ so that
$$\vol(\mathcal{B}\bigcap B(y,\delta))>\frac{3}{4}\vol(B(y,\delta)).$$
One  kind of circle packing problem, with respect to dynamical system, may be as follows.
\begin{problem}
Under the above assumptions, is there a family
$\{B(p,\delta_p)\}_{p\in \mathcal{P}}$ with  $\mathcal{P}\subset
DP(\mathcal{B}^c)\bigcap B(y,\delta)$ so that
\begin{enumerate}[label=(\roman*),ref=\roman*]
\item\label{it:1} $B(p,\delta_p)\subseteq B(y,\delta)$,
\item\label{it:2} $\forall\ p,q\in \mathcal{P}$ with $p\neq q;\ \ \ B(p,\delta_p)\cap
B(q,\delta_q)=\emptyset$,
\item\label{it:3} $\frac{2}{3}\vol(B(y,\delta))<\vol(\bigcup_{p\in \mathcal{P}}B(p,\delta_p))$
and
\item\label{it:4} $\vol(\mathcal{B}^c\cap B(p,\delta_p))>\frac{1}{2}\vol(B(p,\delta_p))$.
\end{enumerate}
\end{problem}
In general and without any additional assumption, the answer of the problem is negative.  Actually, one can prove that
the action group $\Gamma$ is term ergodic with respect to volume measure if there is a family
$\{B(p,\delta_p)\}_{p\in \mathcal{P}}$ which satisfies in the properties (\ref{it:1})-(\ref{it:4}). Indeed, one can have
\begin{align*}
\vol(\mathcal{B}^c\cap B(y,\delta))&=
\vol[\bigcup_{p\in\mathcal{P}}(\mathcal{B}^c\cap B(p,\delta_p))] \\
&=\sum_{p\in\mathcal{P}}\vol(\mathcal{B}^c\cap
B(p,\delta_p))\\
&>\sum_{p\in\mathcal{P}}\frac{1}{2}\vol(B(p,\delta_p))\\
&=\frac{1}{2}\vol(\cup_{p\in
\mathcal{P}}B(p,\delta_p))\\
&=\frac{1}{3}\vol(B(y,\delta)),
\end{align*}
which is a contradiction and so $\vol(\mathcal{B})\in\{0,1\}$.
Therefore, by Theorem \ref{thmA}, minimality of action group $<\mathcal{G}\bigcup\mathcal{G}^{-1}>^+$ implies term ergodicity of it which is lead to the following counterexample.
\begin{example}
In \cite{fu}, Furstenberg constructed an analytic diffeomorphism $T$ of tours which preserve Haar measure and is minimal but not ergodic. So, $\Gamma=<T,T^{-1}>^+$ is minimal action group which is not a term ergodic.
This is a contradiction.
\end{example}
\section*{Acknowledgments}

We thank A. Fakhari, A. J. Homburg,  M. Nassiri, A. Navas and F. Khosh-Ahang for
useful discussions and suggestions.
\def\cprime{$'$}

\end{document}